\newcommand{\vfi}{\varphi}
\newtheorem{theorem}{\sc Theorem}[section]
\newtheorem{lem}[theorem]{\sc Lemma}
\newtheorem{prop}[theorem]{\sc Proposition}
\newtheorem{rem}[theorem]{\sc Remark}
\newtheorem{ex}[theorem]{\sc Example}
\newtheorem{hyp}[theorem]{\sc Hypothesis}
\newtheorem*{thmA}{Theorem A}
\newtheorem*{corB}{Corollary B}
\title[Non-abelian tensor square]{Boundedly finite Conjugacy classes of tensors}
\author[Bastos]{ Raimundo Bastos}
\address{ Departamento de Matem\'atica, Universidade de Bras\'ilia,
Brasilia-DF, 70910-900 Brazil }
\email{bastos@mat.unb.br}
\author[Monetta]{ Carmine Monetta }
\address{Dipartimento di Matematica, Universit\`a di Salerno, Via Giovanni Paolo II, 132 - 84084 - Fisciano (SA), Italy}
\email{cmonetta@unisa.it}
\subjclass[2010]{20E34, 20J06}
\keywords{Structure theorems; Finiteness conditions; Non-abelian tensor square of groups}
\begin{document}

\maketitle

\begin{abstract}
Let $n$ be a positive integer and let $G$ be a group. We denote by $\nu(G)$ a certain extension of the non-abelian tensor square $G \otimes G$ by $G \times G$. Set $T_{\otimes}(G) = \{g \otimes h \mid g,h \in G\}$. We prove that if the size of the conjugacy class $\left |x^{\nu(G)} \right| \leq n$ for every $x \in T_{\otimes}(G)$, then the second derived subgroup $\nu(G)''$ is finite with $n$-bounded order. Moreover, we obtain a sufficient condition for a group to be a BFC-group.
\end{abstract}

\vspace{0,5cm}

\thanks{\it Dedicated to Pavel Shumyatsky on the occasion of his 60th birthday}

\maketitle

\section{Introduction}

The non-abelian tensor square $G \otimes G$ of a group $G$ as introduced by R. Brown and J.\,L. Loday \cite{BL} is defined to be the group generated by all symbols $\; \, g\otimes h, \; g,h\in G$, subject to the relations
\[
gg_1 \otimes h = ( g^{g_1}\otimes h^{g_1}) (g_1\otimes h) \quad
\mbox{and} \quad g\otimes hh_1 = (g\otimes h_1)( g^{h_1} \otimes
h^{h_1})
\]
for all $g,g_1, h,h_1 \in G$. In the same paper, R. Brown and J.-L. Loday presented a topological significance for the non-abelian tensor square of groups (cf. \cite[Section 3]{BL}). The study of the non-abelian tensor square of groups from a group theoretic point of view was initiated by R. Brown, D.\,L. Johnson and E.\,F. Robertson \cite{BJR}. 
If $x$ and $y$ are group-elements, we denote by $x^y=y^{-1}xy$ the conjugate of $x$ by $y$, while the commutator of $x$ and $y$ is the element $[x,y]=x^{-1}x^y$. 
We observe that the defining relations of the tensor square can be viewed as abstractions of commutator relations; thus in \cite{NR1} the following construction is considered. Let $G$ be a group and $\varphi : G \rightarrow G^{\varphi}$ an isomorphism ($G^{\varphi}$ is an isomorphic copy of $G$, where $g \mapsto g^{\varphi}$, for all $g \in G$). Define the group $\nu(G)$ to be \[ \nu (G):= \langle 
G \cup G^{\varphi} \ \vert \ [g_1,{g_2}^{\varphi}]^{g_3}=[{g_1}^{g_3},({g_2}^{g_3})^{\varphi}]=[g_1,{g_2}^{\varphi}]^{{g_3}^{\varphi}},
\; \ g_i \in G \rangle .\]
The motivation for studying $\nu(G)$ is the commutator connection: indeed, the map  $\Phi: G \otimes G \rightarrow [G, G^{\varphi}]$,
defined by $g \otimes h \mapsto [g , h^{\varphi}]$, for all $g, h \in G$, is an isomorphism  (N.\,R. Rocco, \cite[Proposition 2.6]{NR1}). Therefore, from now on we identify the non-abelian tensor square $G \otimes G$ with the subgroup $[G,G^{\vfi}]$ of $\nu(G)$. An element $\alpha \in \nu(G)$ is called a {\em tensor} if $\alpha = [a,b^{\varphi}]$ for suitable $a,b\in G$. We write $T_{\otimes}(G)$ to denote the set of all tensors (in $\nu(G)$). In particular, 
$T_{\otimes}(G)$ is a commutator closed subset of the group $\nu(G)$, wherea subset $X$ of a group is called commutator closed if $[x,y]\in X$ for any $x,y\in X$. 

It is well-known that the set of all tensors $T_{\otimes}(G) $ affects the structure of the non-abelian tensor square $G \otimes G$, and of related constructions (see \cite{BNR, BNR2}). For instance, in \cite{BNR} and in \cite{BNR2}, it was proved that if the set  $T_{\otimes}(G)$ is finite (resp. has exactly $n$ elements), then the non-abelian tensor square $[G,G^{\varphi}]$ is finite (resp. finite with $n$-bounded order). Here and throughout the article we use the expression $\{a,b,\ldots\}$-bounded to mean that a quantity is bounded by a certain number depending only on the parameters $a, b,\ldots$.

Recall that a group $G$ is a BFC-group if every conjugacy class of $G$ contains at most $n$ elements, for a positive integer $n$. In \cite{neu}, B.\,H. Neumann proved that the group $G$ is a BFC-group if and only if the derived subgroup $G'$ is finite, and this occurs if and only if $G$ contains only finitely many commutators. Later, J. Wiegold proved a quantitative version of Neumann's result: if $G$ contains exactly $m$ commutators, then the order of the derived subgroup $G'$ is finite with $m$-bounded order \cite[Theorem 4.7]{W}, and the best known bound was obtained in \cite{gumaroti}. Recall that $x^G=\{x^g \ | \ g\in G\}$ denote the conjugacy class of $x$ in $G$. 
Recently, in \cite{DS}, G. Dierings and P. Shumyatsky proved that if $\left|x^G \right| \leq n$ for every commutator $x$ in $G$, then the second derived subgroup $G''$ is finite with $n$-bounded order. Subsequently, E. Detomi, M. Morigi and P. Shumyatsky obtain an interesting version for higher commutator subgroups (cf. \cite{DMS}).

In the present paper we consider non-abelian tensor square of groups (and related construction) in which tensors have a bounded number of conjugates, obtaining the following result.

\begin{thmA} \label{thm.main}
Let $n$ be a positive integer and $G$ a group. Suppose that the size of the conjugacy class $\left |x^{\nu(G)} \right| \leq n$ for every $x \in T_{\otimes}(G)$. Then the second derived subgroup $\nu(G)''$ is finite with $n$-bounded order. 
\end{thmA}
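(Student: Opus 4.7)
The plan is to prove the theorem by reducing to three finiteness estimates and combining them via the short exact sequence
\[
1 \to [G, G^{\vfi}] \to \nu(G) \to G \times G^{\vfi} \to 1.
\]
I will show $(i)$ $G''$ and $(G^{\vfi})''$ have $n$-bounded order; $(ii)$ $[G, G^{\vfi}]'$ has $n$-bounded order; and $(iii)$ the three mixed commutator subgroups $[G', (G^{\vfi})']$, $[G', [G, G^{\vfi}]]$, $[(G^{\vfi})', [G, G^{\vfi}]]$ have $n$-bounded order. Together with $(i)$ and $(ii)$, these generate $\nu(G)''$, so combining the three bounds yields the theorem.

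For $(i)$, I exploit Rocco's surjective homomorphism $\Phi \colon [G, G^{\vfi}] \twoheadrightarrow [G, G]$, $[g, h^{\vfi}] \mapsto [g, h]$. The defining relations of $\nu(G)$ force $\Phi$ to be equivariant under $G$-conjugation, since $\Phi([g, h^{\vfi}]^{k}) = \Phi([g^{k}, (h^{k})^{\vfi}]) = [g^{k}, h^{k}] = \Phi([g, h^{\vfi}])^{k}$ for every $k \in G$. Consequently, for every commutator $[g, h]$ in $G$,
\[
\left|[g, h]^G\right| \leq \left|[g, h^{\vfi}]^{\nu(G)}\right| \leq n,
\]
and the Dierings--Shumyatsky theorem gives $G''$ of $n$-bounded order; the symmetric argument handles $(G^{\vfi})''$. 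For $(ii)$, the set $T_{\otimes}(G)$ is commutator-closed and (by the defining relations) invariant under conjugation by $G$ and $G^{\vfi}$, and by hypothesis each tensor has at most $n$ conjugates in $\nu(G)$; a BFC-style argument in the spirit of Dierings--Shumyatsky, which uses only commutator-closedness together with bounded conjugacy classes of the generating set, then yields that $\langle T_{\otimes}(G) \rangle' = [G, G^{\vfi}]'$ has $n$-bounded order.

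The main obstacle is $(iii)$. Passing to the quotient $\bar{\nu} := \nu(G)/M$, where $M$ is the normal closure in $\nu(G)$ of $G'' \cdot (G^{\vfi})'' \cdot [G, G^{\vfi}]'$ (which remains of $n$-bounded order since its generating subgroups lie in the FC-center of $\nu(G)$), the images $\bar G$ and $\overline{G^{\vfi}}$ become metabelian and $\overline{[G, G^{\vfi}]}$ becomes abelian. Hence $\bar\nu''$ is generated by the images of the three mixed commutator subgroups, all sitting inside the abelian group $\overline{[G, G^{\vfi}]}$. Their generators are tensors of the form $[c, d^{\vfi}]$ with $c, d \in G'$ together with their $\nu(G)$-conjugates, each having at most $n$ conjugates. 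Combining this bounded-orbit structure with the abelianness of the ambient quotient and the commutator-closedness of $T_{\otimes}(G)$, a Neumann-style FC-argument applied in the abelian setting bounds each of these three subgroups in terms of $n$, completing the proof.
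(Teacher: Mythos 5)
Your overall decomposition and your step $(i)$ are sound: they amount to the paper's Proposition \ref{prop.pavel}, which pushes the hypothesis through the epimorphism $\rho\colon\nu(G)\to G$ and invokes Dierings--Shumyatsky to bound $G''$. The problem is that steps $(ii)$ and $(iii)$ --- which by Rocco's formula $\nu(G)''=[G',(G')^{\varphi}]\,G''\,(G'')^{\varphi}$ contain essentially the entire content of the theorem --- are not proved but only asserted. A subgroup generated by (possibly infinitely many) elements each of which has at most $n$ conjugates need not be finite: in an abelian ambient group every element has exactly one conjugate and the generated subgroup can be arbitrary, and Example \ref{ex.dihedral} of the paper (the infinite dihedral group) exhibits a commutator-closed normal generating set with classes of size at most $2$ generating an infinite subgroup. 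So the ``Neumann-style FC-argument applied in the abelian setting'' you invoke for $(iii)$ does not exist; Neumann and Wiegold bound $G'$ only under the much stronger hypothesis that there are finitely many commutators or that \emph{all} elements have bounded classes.

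The same objection applies to $(ii)$: the Dierings--Shumyatsky proof does not use merely that the set of commutators is commutator-closed with bounded classes. It uses crucially that this set is the full image of the commutator word, so that for $a=[d,e]$ extremal and arbitrary $h_1,h_2$ ranging over a bounded-index subgroup, the element $[h_1d,eh_2]$ is again a commutator which can be written as $ua$ with $u$ lying in a controlled centralizer; this is what lets one trap $[G',U_1']$ inside the finite subgroup $[G',a]$ and then run an induction. Transporting exactly this mechanism to tensors is the heart of the paper's argument: Lemma \ref{lem.utheta} and Proposition \ref{prop.subgroup} establish the identity $[\rho(h_1)d,e^{\varphi}\rho(h_2)^{\varphi}]^{h_2^{-1}}=ua$ with $u\in U$ (modulo $\Theta(G)$, which centralizes $[G,G^{\varphi}]$), and the proof of Theorem~A then performs an induction on the index of a minimal member of the family $\mathcal F$, handling separately the case where the extremal tensor $[d,e^{\varphi}]$ has $d\notin F$. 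None of this machinery, nor any substitute for it, appears in your proposal, so the finiteness of $[G',(G')^{\varphi}]$ (hence of $[G,G^{\varphi}]'$ and of your three mixed commutator subgroups) remains unestablished.
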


It is worth to mention that we manage to adapt the proof of \cite[Theorem 1.1]{DS} in the settings of tensors using the tools presented in~\cite{NR2}.

Furthermore, we also examine the structure of $G$ in terms of some finiteness properties of the set of tensors $T_{\otimes}(G)$, obtaining a sufficient condition for a group $G$ to be a BFC-group.

\begin{corB}\label{coro} 
Let $n$ be a positive integer. Let $G$ be a group in which the derived subgroup $G'$ is finitely generated. Assume that the size of the conjugacy class $|x^{\nu(G)}| \leq n$ for every $x \in T_{\otimes}(G)$. Then $G$ is a BFC-group.
\end{corB}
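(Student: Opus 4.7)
My plan is to deduce Corollary B from Theorem A via the natural epimorphism $\pi \colon \nu(G) \to G$ defined on generators by $g \mapsto g$ and $g^{\varphi} \mapsto g$; well-definedness follows directly from the defining relations of $\nu(G)$.

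By Theorem A, $\nu(G)''$ is finite with $n$-bounded order. Since $G$ embeds in $\nu(G)$ and $G' \leq \nu(G)'$, we obtain $G'' \leq \nu(G)''$, so $G''$ is finite. A tensor $[a, b^{\varphi}]$ projects under $\pi$ to the commutator $[a, b]$, and surjectivity of $\pi$ ensures that the image of its $\nu(G)$-conjugacy class is the entire $G$-conjugacy class of $[a, b]$; hence $|[a,b]^{G}| \leq n$ for every commutator in $G$. Using the finite generation of $G'$, pick commutators $c_1, \dots, c_k$ with $G' = \langle c_1, \dots, c_k\rangle$, and set $C := \bigcap_{i=1}^{k} C_{G}(c_i)$. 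Then $[G : C] \leq n^{k}$ and $C \leq C_{G}(G')$, so $[G : C_{G}(G')] \leq n^{k}$ and every $x \in G'$ satisfies $|x^{G}| \leq n^{k}$. In particular $[G' : Z(G')] \leq n^{k}$, i.e., $G'$ is center-by-finite.

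To conclude that $G'$ is finite---which by Neumann's theorem is equivalent to $G$ being a BFC-group---it remains to show $Z(G')$ is finite. Since $Z(G')$ is a finitely generated abelian group, this reduces to showing it is torsion. Here the full strength of the tensor hypothesis must be exploited, since the example $G = \mathbb{Z}^{2} \rtimes \mathbb{Z}/2$, with $\mathbb{Z}/2$ acting by coordinate swap, satisfies bounded commutator conjugacy in $G$ and has $G' \cong \mathbb{Z}$ finitely generated yet is not BFC; in that example the tensor hypothesis itself fails, due to an infinite-order element in the central kernel $\ker([G, G^{\varphi}] \to G')$. The mechanism I would use is that for any commutator $[a, b] \in G$ and any $g \in C_{G}([a, b])$, the conjugate $[a, b^{\varphi}]^{g}$ equals $[a, b^{\varphi}]$ multiplied by a central element $z_{g}$ of $\ker([G, G^{\varphi}] \to G')$, and $g \mapsto z_{g}$ is a homomorphism $C_{G}([a, b]) \to \ker([G, G^{\varphi}] \to G')$ of image size at most $n$. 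Assembling such bounds as $(a, b)$ varies, together with Theorem A and the finite generation of $G'$, one rules out an infinite-order element of $Z(G')$, since such an element would produce an unbounded cyclic subgroup of the central kernel contradicting the tensor bound. The main obstacle is making this last step precise: extracting from the conjugacy bounds on all tensors in $\nu(G)$ the torsion structure of $Z(G')$.
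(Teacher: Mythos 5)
Your argument has a genuine gap, and you acknowledge it yourself: you never actually prove that $Z(G')$ is finite (equivalently, that $G'$ is finite). The reduction to Neumann's theorem, the bound $|[a,b]^G|\leq n$ for commutators via the projection $\rho\colon\nu(G)\to G$, and the conclusion that $G'$ is center-by-finite are all correct, but they only use the commutator-level consequence of the hypothesis, and as you correctly sense (and as Example \ref{ex.dihedral} in the paper confirms with the infinite dihedral group), that information alone cannot yield the conclusion. The final step you sketch --- a homomorphism $g\mapsto z_g$ into the kernel of $[G,G^{\varphi}]\to G'$ whose image is bounded, assembled over all pairs $(a,b)$ to rule out infinite-order elements of $Z(G')$ --- is not carried out, and it is not clear it can be made to work in the form stated: boundedness of each individual image does not obviously propagate to a torsion statement about $Z(G')$.

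The idea you are missing is that Theorem A controls not just $G''$ but the non-abelian tensor square of $G'$ itself. By Rocco's formula (\cite[Theorem 3.3]{NR1}), $\nu(G)''=[G',(G')^{\varphi}]\,G''\,(G'')^{\varphi}$, so the finiteness of $\nu(G)''$ given by Theorem A forces $[G',(G')^{\varphi}]$ to be finite. The Parvizi--Niroomand result (Lemma \ref{fg}) then says that a finitely generated group with finite non-abelian tensor square is finite; applied to $G'$, which is finitely generated by hypothesis, this gives $|G'|<\infty$ immediately, and Neumann's theorem finishes the proof. This two-line route replaces your entire center-by-finite analysis and is where the finite generation of $G'$ is genuinely used.
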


Finally, we show, by means of two counterexamples, that Corollary~B is no longer true neither if we discard the hypothesis about $G'$ to be finitely generated (see Example \ref{ex.prufer}, below) nor if we consider bounded conjugacy classes of commutators instead of tensors (see Example \ref{ex.dihedral}, below).   

\vspace{2mm}

The paper is organized as follows. In the next section we collect some preliminary results about the non-abelian tensor square, while Section 3 contains the proofs of our main results.

\section{The group $\nu(G)$}

It is well known that the finiteness of the non-abelian tensor square $[G,G^{\varphi}]$, does not imply that $G$ is a finite group. For instance, the Pr\"ufer group $C_{p^{\infty}}$ is an example of an infinite group such that the non-abelian tensor square $[C_{p^{\infty}},(C_{p^{\infty}})^{\varphi}]$ is trivial (and so, finite). This is the case for all torsion, divisible abelian groups. A useful result, due to Parvizi and Niroomand \cite{NP}, provides a sufficient condition for a group to be finite.

\begin{lem} \label{fg}
Let $G$ be a finitely generated group. Suppose that the non-abelian tensor square $[G,G^{\varphi}]$ is finite. Then $G$ is finite. 
\end{lem}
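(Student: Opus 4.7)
The idea is to extract from the finiteness of $[G, G^{\vfi}] \cong G \otimes G$ two independent pieces of structural information about $G$: that the derived subgroup $G'$ is finite, and that the abelianization $G^{\mathrm{ab}} = G/G'$ is finite. Combined with the hypothesis that $G$ is finitely generated, these two facts force $G$ to be finite.

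For the first point, I would use the commutator map $\kappa : G \otimes G \to G'$ determined on generators by $g \otimes h \mapsto [g, h]$. The defining relations of the non-abelian tensor square are satisfied in $G'$ by the standard identities $[gg_1, h] = [g,h]^{g_1}[g_1,h] = [g^{g_1}, h^{g_1}][g_1,h]$ and the symmetric one in the second variable, which match exactly the relations defining $G \otimes G$. Hence $\kappa$ is a well-defined homomorphism, and it is surjective since $G'$ is generated by commutators. In particular $|G'| \leq |G \otimes G| < \infty$.

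For the second point, I would construct the analogous map $\pi : G \otimes G \to G^{\mathrm{ab}} \otimes_{\mathbb{Z}} G^{\mathrm{ab}}$ sending $g \otimes h$ to $\bar g \otimes \bar h$, where $\otimes_{\mathbb{Z}}$ is the usual tensor product of abelian groups. Since conjugation acts trivially on $G^{\mathrm{ab}}$, the two non-abelian tensor relations collapse to the standard $\mathbb{Z}$-bilinearity relations, so $\pi$ is a well-defined surjection. Since $G$ is finitely generated, so is the abelian group $G^{\mathrm{ab}}$, and by the structure theorem we may write $G^{\mathrm{ab}} \cong \mathbb{Z}^r \oplus T$ with $T$ finite. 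Then $G^{\mathrm{ab}} \otimes_{\mathbb{Z}} G^{\mathrm{ab}}$ contains a direct summand isomorphic to $\mathbb{Z}^{r^2}$, and finiteness of this tensor product forces $r = 0$. Hence $G^{\mathrm{ab}}$ is finite.

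Combining the two points, $G$ is an extension of the finite group $G'$ by the finite group $G^{\mathrm{ab}}$, hence finite. The only step that requires any verification beyond standard facts is the well-definedness of the two maps $\kappa$ and $\pi$, but each amounts to a direct comparison of the tensor-square relations with well-known commutator (resp.\ bilinearity) identities, so I do not expect a genuine obstacle.
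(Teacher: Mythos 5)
Your proof is correct and complete. Note that the paper does not prove this lemma at all: it is quoted from Parvizi and Niroomand \cite{NP}, so there is no internal argument to compare against; your route via the two canonical epimorphisms $G\otimes G \twoheadrightarrow G'$ (the commutator map) and $G\otimes G \twoheadrightarrow G^{\mathrm{ab}}\otimes_{\mathbb{Z}} G^{\mathrm{ab}}$ is the standard one underlying the cited result. Both maps are indeed well defined for exactly the reasons you give, the first yields $|G'|<\infty$ without using finite generation, the second together with the structure theorem for finitely generated abelian groups forces the torsion-free rank of $G^{\mathrm{ab}}$ to be zero, and finiteness of $G$ follows from the extension $1\to G'\to G\to G^{\mathrm{ab}}\to 1$.
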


The following basic properties are consequences of 
the defining relations of $\nu(G)$ and commutator rules (see \cite[Section 2]{NR1} for more details). 

\begin{lem} 
\label{basic.nu}
The following relations hold in $\nu(G)$, for all 
$g, h, x, y \in G$.
\begin{itemize}
\item[(a)] $[g, h^{\varphi}]^{[x, y^{\varphi}]} = [g, h^{\varphi}]^{[x, 
y]}$; 
\item[(b)] $[g, h^{\varphi}, x^{\varphi}] = [g, h, x^{\varphi}] = [g, 
h^{\varphi}, x] = [g^{\vfi}, h, x^{\vfi}] = [g^{\vfi}, h^{\vfi}, x] = 
[g^{\vfi}, 
h, x]$;
\item[(c)] $[[g,h^{\varphi}],[x,y^{\varphi}]] = [[g,h],[x,y]^{\varphi}]$. 
\end{itemize}
\end{lem}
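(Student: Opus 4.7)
My plan is to extract, from the defining relation of $\nu(G)$, a single ``linchpin'' principle and then use it (together with routine commutator calculus) to dispatch each of (a), (b), (c). The two halves of the defining relation play distinct roles: the equation $[g_1, g_2^{\varphi}]^{g_3} = [g_1^{g_3}, (g_2^{g_3})^{\varphi}]$ shows that $[G, G^{\varphi}]$ is normal in $\nu(G)$, while the companion equation $[g_1, g_2^{\varphi}]^{g_3} = [g_1, g_2^{\varphi}]^{g_3^{\varphi}}$ shows that $g_3$ and $g_3^{\varphi}$ induce the same automorphism on each generator, hence (by normality) on the whole of $[G, G^{\varphi}]$. Consequently, for any $\alpha \in [G, G^{\varphi}]$ and any word $w$ over the alphabet $G \cup G^{\varphi}$, the conjugate $\alpha^w$ depends only on the reduced word $\tilde{w} \in G$ obtained by erasing each $\varphi$. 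This is the linchpin.

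For (a), applying the linchpin to $w = [x, y^{\varphi}] = x^{-1} (y^{\varphi})^{-1} x y^{\varphi}$, whose reduction is $[x, y]$, immediately yields the claim. For the three equalities in (b) that toggle $\varphi$ only in the outer slot (for instance, $[g, h^{\varphi}, x^{\varphi}] = [g, h^{\varphi}, x]$), the linchpin again does the work, since $[g, h^{\varphi}] \in [G, G^{\varphi}]$ is sent to the same element by conjugation with $x$ or $x^{\varphi}$. The remaining equalities in (b), which migrate $\varphi$ between the first two arguments (such as $[g, h, x^{\varphi}] = [g, h^{\varphi}, x]$), are genuinely different: here the inner commutator $[g, h]$ lies in $G$ rather than in $[G, G^{\varphi}]$, so the linchpin does not apply directly. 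These must be established by an explicit computation: I would expand $[g, h] = g^{-1} g^h$, apply the standard commutator expansions $[ab, c] = [a, c]^b [b, c]$ and $[a^{-1}, b] = ([a, b]^{-1})^{a^{-1}}$, and simplify each resulting factor using the defining relation $[a, b^{\varphi}]^c = [a^c, (b^c)^{\varphi}]$ together with the linchpin, checking that both sides match term by term.

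Finally, for (c), I would use (a) to replace the inner $[x, y^{\varphi}]$ by $[x, y]$, obtaining $[[g, h^{\varphi}], [x, y^{\varphi}]] = [[g, h^{\varphi}], [x, y]]$. Then the $\varphi$-migration identity $[g, h^{\varphi}, z] = [g, h, z^{\varphi}]$ from (b), specialized to $z = [x, y] \in G$, converts this to $[[g, h], [x, y]^{\varphi}]$, as required.

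The hard part will be the $\varphi$-migration in (b). Unlike every other identity in the lemma, it is not a formal consequence of the linchpin; one cannot simply transfer $\varphi$'s from one commutator slot to another. It has to be verified by a direct (but elementary) commutator-calculus computation that unfolds both sides into products of basic tensors $[a, b^{\varphi}]$ and then recombines them via the defining relation. Everything else in the proof is then a short deduction from (a), (b), and the linchpin.
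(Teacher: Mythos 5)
Your outline is essentially correct and follows the same route the paper does: the paper offers no proof of this lemma at all, asserting only that the relations are ``consequences of the defining relations of $\nu(G)$ and commutator rules'' and deferring to \cite{NR1}, and your ``linchpin'' is exactly the observation the paper records later as Lemma~\ref{lem.the}(a), namely that $g$ and $g^{\varphi}$ induce the same (generator-preserving) conjugation on $[G,G^{\varphi}]$, so that $\alpha^{w}=\alpha^{\rho(w)}$ for $\alpha\in[G,G^{\varphi}]$. Your deductions of (a), of the equalities in (b) that merely toggle $\varphi$ on the conjugating letter (note $[g^{\varphi},h]=[h,g^{\varphi}]^{-1}\in[G,G^{\varphi}]$, so the linchpin does apply there), and of (c) from (a) together with (b) applied to the element $[x,y]\in G$, are all sound. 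The one caveat is that the $\varphi$-migration identities in (b), e.g.\ $[g,h,x^{\varphi}]=[g,h^{\varphi},x]$, carry essentially all the content of the lemma, and you describe rather than execute the computation that proves them; the expansion you propose (unfold $[[g,h],x^{\varphi}]$ via $[ab,c]=[a,c]^{b}[b,c]$ and $[a^{-1},c]=([a,c]^{-1})^{a^{-1}}$ into conjugates of basic tensors, rewrite each conjugate with $[a,b^{\varphi}]^{c}=[a^{c},(b^{c})^{\varphi}]$, and compare with the analogous expansion of $[g,h^{\varphi}]^{-1}[g^{x},(h^{x})^{\varphi}]$) is precisely the argument in \cite{NR1}, so the method is right, but as written your treatment of (b) is a plan rather than a verification.
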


Given a group $G$ there exists an epimorphism \[\rho: \nu(G) \to G,\]  given by $g \mapsto g$, $h^{\vfi} \mapsto h$. In the notation of \cite[Section 2]{NR2}, let $\Theta(G)$ denote the kernel of $\rho$. The following lemma collects some results related to the subgroup $\Theta(G)$ which can be found in \cite[Section 2]{NR2}.   

\begin{lem} \ 
\label{lem.theta} \
\begin{itemize}
\item[(a)] The quotient group  $\nu(G)/\Theta(G)$ is isomorphic to $G$;
\item[(b)] The subgroup $\Theta(G)$ centralizes $[G,G^{\varphi}]$. 
\end{itemize}
\end{lem}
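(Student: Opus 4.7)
The plan is to dispatch part (a) by the First Isomorphism Theorem applied to the epimorphism $\rho$, and to prove part (b) by exploiting the symmetry between $G$ and $G^{\varphi}$ that is built into the defining relations of $\nu(G)$.

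For part (a), I would first verify that the assignment $g \mapsto g$, $h^{\varphi} \mapsto h$ extends to a well-defined homomorphism $\rho \colon \nu(G) \to G$. Since $\nu(G)$ is presented on generators $G \cup G^{\varphi}$ subject to the relations $[g_1, g_2^{\varphi}]^{g_3} = [g_1^{g_3}, (g_2^{g_3})^{\varphi}] = [g_1, g_2^{\varphi}]^{g_3^{\varphi}}$, it suffices to check that under the proposed assignment each of these relations collapses to the standard commutator identity $[g_1, g_2]^{g_3} = [g_1^{g_3}, g_2^{g_3}]$ in $G$. Surjectivity of $\rho$ is clear since $G \subseteq \nu(G)$ maps identically, and $\Theta(G)$ is the kernel by definition, so the First Isomorphism Theorem gives $\nu(G)/\Theta(G) \cong G$.

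For part (b), the key observation is that the defining relation $[g_1, g_2^{\varphi}]^{g_3} = [g_1, g_2^{\varphi}]^{g_3^{\varphi}}$ says precisely that $g_3$ and $g_3^{\varphi}$ induce the same conjugation action on every tensor. Hence for every generator $x$ of $\nu(G)$, conjugation by $x$ on a tensor $[g, h^{\varphi}]$ depends only on $\rho(x) \in G$. I would then proceed by induction on the word length, using that conjugation distributes over products and — crucially — that the conjugate of a tensor is again a tensor, by the companion defining relation $[g_1, g_2^{\varphi}]^{g_3} = [g_1^{g_3}, (g_2^{g_3})^{\varphi}]$. This shows that for every $w \in \nu(G)$ and every tensor $t$, one has $t^w = t^{\rho(w)}$. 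Extending by distributivity of conjugation over products of tensors, the identity $c^w = c^{\rho(w)}$ holds for all $c \in [G, G^{\varphi}]$. Finally, if $\theta \in \Theta(G) = \ker \rho$, then $\rho(\theta) = 1$, so $c^{\theta} = c$ for every $c \in [G, G^{\varphi}]$, proving that $\Theta(G)$ centralizes $[G, G^{\varphi}]$.

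The only mild obstacle is to keep careful track of the induction in (b): each step relies on the stability of the set of tensors under conjugation, which is exactly what one of the two defining relations of $\nu(G)$ provides. Once this is in place, the lemma is essentially the formal statement that $\nu(G)$ was \emph{designed} so that its conjugation action on $[G, G^{\varphi}]$ factors through the quotient $\nu(G)/\Theta(G) \cong G$.
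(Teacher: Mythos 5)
Your proof is correct. Note, however, that the paper itself offers no proof of this lemma: it simply cites \cite[Section 2]{NR2}, so there is no in-paper argument to compare against. Your write-up is the standard self-contained verification (and essentially the one in Rocco's paper): part (a) is the First Isomorphism Theorem once one checks that the defining relators of $\nu(G)$, viewed in the free product $G * G^{\varphi}$, die under the map induced by $\mathrm{id}_G$ and $\varphi^{-1}$ — they all collapse to the identity $[g_1,g_2]^{g_3}=[g_1^{g_3},g_2^{g_3}]$ in $G$. For part (b), you correctly isolate the two ingredients needed to make the word-length induction go through: the relation $[g_1,g_2^{\varphi}]^{g_3}=[g_1,g_2^{\varphi}]^{g_3^{\varphi}}$ guarantees that conjugation of a tensor by a generator depends only on its image under $\rho$, while the companion relation $[g_1,g_2^{\varphi}]^{g_3}=[g_1^{g_3},(g_2^{g_3})^{\varphi}]$ guarantees that the conjugate is again a tensor, so the induction hypothesis can be reapplied at the next letter of the word. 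Extending from tensors to all of $[G,G^{\varphi}]$ via multiplicativity of conjugation, and then specializing to $w\in\ker\rho$, gives (b). The only benefit of spelling this out, as you do, is that it makes the paper self-contained on a point that is used repeatedly later (e.g.\ in Lemma \ref{lem.the} and Proposition \ref{prop.subgroup}); there is no gap in your argument.
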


The next lemma is a crucial observation, that we will use several times in our proofs. 

\begin{lem} \ \label{lem.the}
Let $\alpha,\beta \in \nu(G)$. 
\begin{itemize}
    \item[(a)] $[x,y^{\varphi}]^{\alpha} = [x,y ^{\varphi}]^{\rho(\alpha)} = [x,y ^{\varphi}]^{\rho(\alpha)^{\varphi}}$, for any $x,y \in G$; 
    \item[(b)] There exists an element $\theta \in \Theta(G)$ such that $$[\alpha,\beta] = [u,v^{\varphi}] \theta,$$ where $u=\rho(\alpha)$ and $v = \rho(\beta)$. 
\end{itemize}

\end{lem}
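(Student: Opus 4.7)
The plan is to use Lemma~\ref{lem.theta} to reduce every conjugation on $[G,G^{\varphi}]$ (and every commutator in $\nu(G)$) to an expression involving only $\rho(\alpha),\rho(\beta)\in G$, up to an error living in $\Theta(G)$. Before starting, I would record the normality of $[G,G^{\varphi}]$ in $\nu(G)$: the defining relations $[g_1,g_2^{\varphi}]^{g_3}=[g_1^{g_3},(g_2^{g_3})^{\varphi}]=[g_1,g_2^{\varphi}]^{g_3^{\varphi}}$ show that conjugation by any generator of $\nu(G)$ keeps $[G,G^{\varphi}]$ inside itself.

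For part~(a), I would set $\theta:=\rho(\alpha)^{-1}\alpha$, viewing $\rho(\alpha)\in G$ as an element of $\nu(G)$. Since $\rho$ restricts to the identity on this $G$-copy, $\rho(\theta)=1$, so $\theta\in\Theta(G)$ and $\alpha=\rho(\alpha)\,\theta$. Then $[x,y^{\varphi}]^{\alpha}=\theta^{-1}\bigl([x,y^{\varphi}]^{\rho(\alpha)}\bigr)\theta$, and since $[x,y^{\varphi}]^{\rho(\alpha)}$ lies in the normal subgroup $[G,G^{\varphi}]$, Lemma~\ref{lem.theta}(b) allows dropping the outer $\theta$-conjugation. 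This gives the first equality; the second equality $[x,y^{\varphi}]^{\rho(\alpha)}=[x,y^{\varphi}]^{\rho(\alpha)^{\varphi}}$ is precisely the defining relation of $\nu(G)$ applied with $g_3=\rho(\alpha)$. Equivalently, one could rerun the previous argument with the decomposition $\alpha=\rho(\alpha)^{\varphi}\theta'$, for some $\theta'\in\Theta(G)$ obtained in the same way.

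For part~(b), I would put $u=\rho(\alpha)$, $v=\rho(\beta)$ and define $\theta:=[u,v^{\varphi}]^{-1}[\alpha,\beta]$, so that the identity $[\alpha,\beta]=[u,v^{\varphi}]\theta$ holds by construction. The only remaining check is that $\theta\in\Theta(G)=\ker\rho$: applying $\rho$ and using $\rho(v^{\varphi})=v$ gives $\rho(\theta)=[u,v]^{-1}[u,v]=1$, as desired.

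No step here looks difficult; the statement is essentially a repackaging of Lemma~\ref{lem.theta}. The one subtlety is the normality of $[G,G^{\varphi}]$ in $\nu(G)$, which is not explicitly spelled out in the preceding lemmas but, as indicated above, is immediate from the defining relations of $\nu(G)$.
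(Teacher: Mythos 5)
Your proof is correct and matches the paper's: part (b) is verbatim the paper's argument (apply $\rho$ and note the discrepancy lies in $\ker\rho=\Theta(G)$), while for part (a) the paper simply cites the defining relations of $\nu(G)$, and your factorization $\alpha=\rho(\alpha)\theta$ with $\theta\in\Theta(G)$ combined with Lemma~\ref{lem.theta}(b) is a valid way of filling in that detail (the more direct route would be to induct on a word for $\alpha$ in the generators $G\cup G^{\varphi}$, replacing each conjugation by a generator $w$ with conjugation by $\rho(w)$ via the defining relations). Your observation that the normality of $[G,G^{\varphi}]$ in $\nu(G)$ is needed and follows from the defining relations is also accurate.
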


\begin{proof}
(a) It holds by defining relations of $\nu(G)$.  \\

(b) Note that $$ \rho([\alpha,\beta]) = [u,v] =  \rho([u,v^{\varphi}]).$$  Since $\Theta(G)$ is the kernel  of $\rho$, it follows that there exists an element $\theta \in \Theta(G)$ such that $$ [\alpha,\beta] = [u,v^{\varphi}]\theta,$$ as well. The proof is complete. 
\end{proof}

Given an element $\alpha \in [G,G^{\varphi}]$, we define $l_{\otimes}(\alpha)$ to be the smallest positive integer such that $\alpha$ may be written as a product of $l_{\otimes}(\alpha)$ tensors. We call $l_{\otimes}(\alpha)$ the length of $\alpha$ with respect to $T_{\otimes}(G)$. The following result is an immediate consequence of \cite[Lemma 2.1]{DS}. 

\begin{lem}\label{lem.coset} Let $C$ be a subgroup of finite index $m$ in $[G,G^{\varphi}]$. Then for every $b \in [G,G^{\varphi}]$,  the coset $Cb$ contains an element $h$ such that $l_{\otimes}(h)\leq m-1$.
\end{lem}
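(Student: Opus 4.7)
The plan is to mimic the pigeonhole argument from \cite[Lemma 2.1]{DS}, with tensors playing the role of commutators. The only structural fact needed is that $T_{\otimes}(G)$ generates $[G,G^{\varphi}]$, so that every element of $[G,G^{\varphi}]$ admits an expression as a product of finitely many tensors and the function $l_{\otimes}$ is well-defined on $[G,G^{\varphi}]$.

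Among all elements of the coset $Cb$, I would pick one, say $h$, of minimal length $k = l_{\otimes}(h)$, and fix a shortest decomposition $h = t_1 t_2 \cdots t_k$ with $t_i \in T_{\otimes}(G)$. Then I would form the $k+1$ partial products $p_0 = 1$ and $p_i = t_1 \cdots t_i$ for $1 \le i \le k$, and aim to derive a contradiction from $k \geq m$. Indeed, assuming $k \geq m$, there are $k+1 \geq m+1$ partial products but only $m$ right cosets of $C$ in $[G,G^{\varphi}]$, so by pigeonhole there are indices $0 \leq i < j \leq k$ with $Cp_i = Cp_j$. Writing $c = p_i^{-1} p_j = t_{i+1} \cdots t_j$, this equality translates to $p_i c p_i^{-1} \in C$. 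Setting
\[h' = t_1 \cdots t_i \cdot t_{j+1} \cdots t_k = p_i \cdot (t_{j+1} \cdots t_k),\]
a direct computation gives $h(h')^{-1} = p_i c p_i^{-1} \in C$, so $h' \in Ch = Cb$, while $l_{\otimes}(h') \leq k-(j-i) < k$, contradicting the minimality of $h$. Hence $k \leq m-1$.

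There is essentially no serious obstacle beyond verifying the coset bookkeeping above; the lemma is a transparent transcription of the commutator-version of \cite{DS}, justifying the authors' claim that it is ``immediate''. One minor sanity check is the trivial case $b \in C$, where $h = 1$ has $l_{\otimes}(h) = 0 \leq m-1$ as soon as $m \geq 1$, so no issue arises there either.
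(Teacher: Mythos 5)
Your proof is correct and is precisely the pigeonhole argument that the paper invokes by citing \cite[Lemma 2.1]{DS} (the paper supplies no proof of its own for this lemma). The only point worth making explicit is that $l_{\otimes}$ is well-defined because inverses of tensors are again tensors (e.g.\ $[g,h^{\varphi}]^{-1}=[g^{-1},(h^{g})^{\varphi}]$), so every element of $[G,G^{\varphi}]$, and in particular every element of the coset $Cb$, really is a product of tensors.
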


\section{Proofs}

We will now fix some notation and hypothesis.

\begin{hyp}\label{hyp} Let $G$ be a group. Suppose that $C_{\nu(G)}(x)$ has finite index at most $n$ in $\nu(G)$ for each $x\in T_{\otimes}(G)$. Let $m$ be the maximum of indices of $C_{[G,G^{\varphi}]}(x)$ in $[G,G^{\varphi}]$, where $x\in T_{\otimes}(G)$.
Then consider $a\in T_{\otimes}(G)$ such that $C_{[G,G^{\varphi}]}(a)$ has index precisely $m$ in $[G,G^{\varphi}]$. By Lemma ~\ref{lem.coset} we can choose $b_1,\ldots,b_m\in [G,G^{\varphi}]$ such that $l_{\otimes}(b_i)\leq m-1$ and $a^H=\{a^{b_i};\ i=1,\ldots,m\}$. Finally set $U=C_{\nu(G)}(\langle b_1,\ldots,b_m\rangle)$.
\end{hyp}

\begin{rem}
Notice that the subgroup $U$ has finite $n$-bounded index in $\nu(G)$ because $l_{\otimes}(b_i)\leq m-1$ and $C_{\nu(G)}(x)$ has index at most $n$ in $\nu(G)$ for every $x\in T_{\otimes}(G)$. Moreover, from Lemma ~\ref{lem.theta} (b) it follows that the subgroup $\Theta(G)$ is contained in $U$.
\end{rem}

Since Lemma \ref{basic.nu}~(a) shows that the commutator of tensors is still a tensor, from \cite[Lemma 2.3]{DS} we have directly the following.

\begin{lem}\label{lem.comm} Assume Hypothesis ~\ref{hyp}. Then the subgroup $[[G,G^{\varphi}],x]$ has finite $m$-bounded order for any $x\in T_{\otimes}(G)$.
\end{lem}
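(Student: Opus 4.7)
The plan is to appeal directly to \cite[Lemma 2.3]{DS}. That lemma is formulated in the abstract context of a group $H$ generated by a commutator-closed subset $X$ such that $|x^H| \le m$ for every $x \in X$, and concludes that $[H, x]$ is finite of $m$-bounded order for every $x \in X$. So the task reduces to verifying that the pair $\bigl([G, G^\varphi],\, T_\otimes(G)\bigr)$ fits this framework.

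First, I would record that $[G, G^\varphi]$ is generated by the set of tensors $T_\otimes(G)$, by the very definition of $\nu(G)$. Next, under Hypothesis~\ref{hyp} we have $|x^{[G, G^\varphi]}| \le m$ for every tensor $x$, by the choice of $m$. The one non-obvious verification is commutator-closure of $T_\otimes(G)$: this follows from Lemma~\ref{basic.nu}~(c), since $[[g,h^\varphi],[x,y^\varphi]] = [[g,h],[x,y]^\varphi]$ is again a tensor (with $[g,h],[x,y] \in G$). With these three ingredients, \cite[Lemma 2.3]{DS} applies with $H = [G, G^\varphi]$ and $X = T_\otimes(G)$, and the conclusion of Lemma~\ref{lem.comm} follows at once.

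There is no substantial obstacle: once commutator-closure of the tensor set is observed, the statement is essentially a dictionary translation of the commutator version in \cite{DS} into the tensor setting. The only mild subtlety is that the conjugacy-class bound used is the index $m$ inside $[G, G^\varphi]$ rather than the a priori weaker index $n$ inside $\nu(G)$, but this refinement is exactly what Hypothesis~\ref{hyp} arranges by passing from centralizers in $\nu(G)$ to centralizers in $[G, G^\varphi]$.
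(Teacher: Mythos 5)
Your proposal matches the paper's own argument: the authors likewise obtain the lemma directly from \cite[Lemma 2.3]{DS}, using the fact that $T_{\otimes}(G)$ is commutator closed (the identity $[[g,h^{\varphi}],[x,y^{\varphi}]] = [[g,h],[x,y]^{\varphi}]$ from Lemma~\ref{basic.nu}) and that $m$ bounds the conjugacy classes of tensors inside $[G,G^{\varphi}]$ by the choice made in Hypothesis~\ref{hyp}. Your explicit verification of the three hypotheses of \cite[Lemma 2.3]{DS} is, if anything, slightly more careful than the paper's one-line justification.
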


The next lemma is somewhat analogous to \cite[Lemma 4.5]{W}.

\begin{lem}\label{lem.utheta} Assume Hypothesis~\ref{hyp}. If $u\in U$ and $ua\in~T_{\otimes}(G)$, then $[[G,G^{\varphi}],u]\leq[[G,G^{\varphi}],a]$.
\end{lem}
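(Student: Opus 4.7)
The plan is to fix an arbitrary $h \in [G, G^{\varphi}]$ and show that $[h, u]$ lies in $[[G, G^{\varphi}], a]$; this will suffice because the subgroup $[[G, G^{\varphi}], u]$ is generated by such commutators.

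The key reduction exploits the defining property of $U$, namely that $u$ centralizes every $b_i$. This yields $(ua)^{b_i} = u\, a^{b_i}$ for $i = 1, \ldots, m$, and these $m$ elements are pairwise distinct since the $a^{b_i}$ are. As $ua$ is a tensor, its conjugacy class in $[G, G^{\varphi}]$ has size at most $m$ by Hypothesis~\ref{hyp}, so the list above exhausts $(ua)^{[G, G^{\varphi}]}$. Consequently, for any $h \in [G, G^{\varphi}]$ there exist indices $i$ and $j$ such that $a^h = a^{b_i}$ and $(ua)^h = u\, a^{b_j}$.

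The remainder is commutator calculus. From the standard identity $[h, ua] = [h, a]\,[h, u]^a$ one isolates $[h, u]^a = [h, a]^{-1}[h, ua]$. Using the formulas for $a^h$ and $(ua)^h$ just obtained, with $u$ and $u^{-1}$ cancelling in the second computation, one finds $[h, a] = [b_i, a]$ and $[h, ua] = [b_j, a]$. Both factors belong to $[[G, G^{\varphi}], a]$, and hence so does $[h, u]^a$.

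To finish, I would observe that $[G, G^{\varphi}]$ is normal in $\nu(G)$ (immediate from the defining relations of $\nu(G)$), so the subgroup $[[G, G^{\varphi}], a]$ is $a$-invariant because $[x, a]^a = [x^a, a]$ for every $x \in [G, G^{\varphi}]$. Therefore $[h, u]$ itself belongs to $[[G, G^{\varphi}], a]$, as desired. The only step that really needs care is the counting argument at the start: producing $m$ distinct conjugates of $ua$ through the $b_i$'s saturates the bound provided by Hypothesis~\ref{hyp} and forces the convenient form $(ua)^h = u\, a^{b_j}$; everything afterwards is formal.
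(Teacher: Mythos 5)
Your proof is correct and follows essentially the same route as the paper's: the same counting argument (the $m$ distinct conjugates $(ua)^{b_i}=ua^{b_i}$ exhaust $(ua)^{[G,G^{\varphi}]}$ by maximality of $m$), followed by the same kind of commutator bookkeeping. The only cosmetic difference is that you isolate $[h,u]^a$ via $[h,ua]=[h,a][h,u]^a$ and then conjugate back using the $a$-invariance of $[[G,G^{\varphi}],a]$, whereas the paper computes $[u,h]=a^{b_i}a^{-h}=[b_i,a^{-1}][a^{-1},h]$ directly.
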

\begin{proof}  

For every $i=1, \ldots, m$  $(ua)^{b_i}=u a^{b_i}$. Thus the elements $ua^{b_i}$ form the conjugacy class $(ua)^{[G,G^{\varphi}]}$ because $|(ua)^{[G,G^{\varphi}]}| \leq m $. Therefore, whenever we consider an element $h\in [G,G^{\varphi}]$ there exists $i\in\{1,\ldots,m\}$ such that $(ua)^{h}=ua^{b_i}$ and so $u^ha^h=ua^{b_i}$. Hence, $$[u,h]=a^{b_i}a^{-h}=[b_i,a^{-1}][a^{-1},h]\in~[[G,G^{\varphi}],a].$$ The lemma follows.
\end{proof}

\begin{prop}\label{prop.subgroup} Assume Hypothesis~\ref{hyp} and write $a=[d,e^{\varphi}]$ for suitable $d,e\in G$. Then there exists a subgroup $U_1\leq U$ with the following properties.
\begin{enumerate}
\item[1.] The index of $U_1$ in $\nu(G)$ is $n$-bounded;
\item[2.] $[[G,G^{\varphi}],U_1']\leq[[G,G^{\varphi}],a]^{d^{-1}}$;
\item[3.] $[[G,G^{\varphi}],[U_1,d]]\leq[[G,G^{\varphi}],a]$.
\end{enumerate}
\end{prop}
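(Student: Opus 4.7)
The plan is to define $U_{1}$ as the intersection of $U$ with a finite collection of additional centralizers $C_{\nu(G)}(y_{j})$, each $y_{j}$ being a product of an $n$-bounded number of tensors. By Hypothesis~\ref{hyp} every such centralizer has $n$-bounded index in $\nu(G)$, so~(1) will follow once the $y_{j}$ are specified. The structural content then lies in showing that the centralizer conditions force both the containment $[u,d]\in U$ for each $u\in U_{1}$ (and analogously $[u,v]^{d}\in U$ for $u,v\in U_{1}$) and the identity that $[u,d]\cdot a$ and $[u,v]^{d}\cdot a$ lie in $T_{\otimes}(G)$ modulo the subgroup $\Theta(G)$, which centralizes $H:=[G,G^{\varphi}]$ by Lemma~\ref{lem.theta}(b). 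Once these are in place, Lemma~\ref{lem.utheta} applies to give~(2) and~(3).

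Concretely, set $s=[H:U\cap H]$, which is $n$-bounded; by Lemma~\ref{lem.coset} pick coset representatives $t_{1},\ldots,t_{s}\in H$ of $U\cap H$ in $H$ with $l_{\otimes}(t_{j})\leq s-1$. To $\{b_{1},\ldots,b_{m}\}$ I would adjoin $[d,b_{i}]$, $[d,t_{j}]$, $[t_{j},e^{\varphi}]$, plus any further iterated commutators forced by the verifications below; by Lemma~\ref{basic.nu} each is a product of boundedly many tensors. For~(3), fix $u\in U_{1}$. Lemma~\ref{lem.the}(b) gives $[u,d]=[\rho(u),d^{\varphi}]\theta$ with $\theta\in\Theta(G)$; the identity $[xd,e^{\varphi}]=[x,e^{\varphi}]^{d}\cdot a$ applied to $x=\rho(u)$, together with the chosen centralizer conditions, rewrites $[u,d]\cdot a$ as an element of $T_{\otimes}(G)$ modulo $\Theta(G)$. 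Applying Lemma~\ref{lem.utheta} to $[u,d]\in U$ then yields $[H,[u,d]]\leq[H,a]$.

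For~(2), the same strategy is applied to $[u,v]^{d}$: Lemma~\ref{lem.the}(b) writes $[u,v]=[\rho(u),\rho(v)^{\varphi}]\theta'$ modulo $\Theta(G)$, the centralizer conditions force $[u,v]^{d}\in U$, and repeated use of Lemma~\ref{basic.nu} brings $[u,v]^{d}\cdot a$ into $T_{\otimes}(G)$ modulo $\Theta(G)$. Lemma~\ref{lem.utheta} then gives $[H,[u,v]^{d}]\leq[H,a]$, which upon conjugating by $d^{-1}$ produces $[H,[u,v]]\leq[H,a]^{d^{-1}}$. The principal obstacle is the twofold constraint on the tensors $y_{j}$: they must be few and of bounded tensor length (so that $U_{1}$ retains $n$-bounded index) while simultaneously being rich enough that the conjugation and commutator identities of Lemma~\ref{basic.nu} and Lemma~\ref{lem.the} bring $[u,d]$ and $[u,v]^{d}$ into $U$ and their products with $a$ into $T_{\otimes}(G)$ modulo $\Theta(G)$. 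Balancing these two requirements through the $\nu(G)$-commutator calculus is the heart of the argument.
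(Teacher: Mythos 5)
Your high-level strategy is the right one --- produce, for suitable $h_1,h_2\in U_1$, an element $u\in U$ with $ua\in T_{\otimes}(G)$ (working modulo $\Theta(G)$ is harmless, since $\Theta(G)$ centralizes $H:=[G,G^{\varphi}]$ and lies in $U$), and then invoke Lemma~\ref{lem.utheta} --- and cutting $U_1$ out of $U$ by boundedly many further centralizer conditions could in principle be arranged (the paper instead takes the closed-form $U_1=U\cap U^{d^{-1}}\cap U^{d^{-1}e^{-1}}$, which has $n$-bounded index for free and makes the membership checks immediate). But your write-up stops short of the actual content, and the two places where it is specific are both problematic. For property~3, the identity you invoke, $[xd,e^{\varphi}]=[x,e^{\varphi}]^{d}a$, exhibits $[\rho(u),e^{\varphi}]^{d}\cdot a$ as a tensor; since $\rho\bigl([\rho(u),e^{\varphi}]^{d}\bigr)=[\rho(u),e]^{d}\neq[\rho(u),d]=\rho([u,d])$ in general, this element is not congruent to $[u,d]$ modulo $\Theta(G)=\ker\rho$, so it cannot feed Lemma~\ref{lem.utheta} to give $[H,[u,d]]\leq[H,a]$. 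What is needed is the expansion in the \emph{second} argument: $[d,(e\rho(h_2))^{\varphi}]^{h_2^{-1}}=[d,\rho(h_2)^{\varphi}]^{h_2^{-1}}\,a$, whose first factor is congruent to $[d,h_2]^{h_2^{-1}}=[h_2^{-1},d]$ modulo $\Theta(G)$.

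For property~2 the gap is more serious: $[u,v]^{d}\cdot a$ is in general \emph{not} a tensor modulo $\Theta(G)$ --- a product of two unrelated tensors need not be a tensor, and none of the defining relations of $\nu(G)$ collapses $[\rho(u),\rho(v)^{\varphi}]^{d}[d,e^{\varphi}]$ into a single symbol $[g,h^{\varphi}]$. The element that actually is a tensor is $[\rho(h_1)d,(e\rho(h_2))^{\varphi}]^{h_2^{-1}}$, which expands as $[\rho(h_1),\rho(h_2)^{\varphi}]^{dh_2^{-1}}\,[d,\rho(h_2)^{\varphi}]^{h_2^{-1}}\,[\rho(h_1),e^{\varphi}]^{d}\cdot a$. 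One must verify that this \emph{three}-factor prefix lies in $U$ (this is exactly what the conjugates $U^{d^{-1}}$ and $U^{d^{-1}e^{-1}}$ in the definition of $U_1$ are for; your proposed centralizers of $[d,b_i]$, $[d,t_j]$, $[t_j,e^{\varphi}]$ do not obviously achieve it), apply Lemma~\ref{lem.utheta} to it, and then strip off the second and third factors by specializing $h_1=1$ and $h_2=1$ and using the normality of $[H,a]$ in $H$, before finally conjugating by $d^{-1}$. This cancellation step is the heart of the proof and is entirely absent; your closing sentence concedes that the ``balancing'' is ``the heart of the argument'' without carrying it out, and since the generating set $\{y_j\}$ is never fixed, property~1 is not actually established either.
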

\begin{proof} Set $$U_1=U\cap U^{d^{-1}}\cap U^{d^{-1}e^{-1}}.$$ Since the index of $U$ in $\nu(G)$ is $n$-bounded, we conclude that the index of $U_1$ in $\nu(G)$ is $n$-bounded as well. 

Now, for every $h_1,h_2\in U_1$ we have $$[\rho(h_1)d,e^{\varphi}\rho(h_2)^{\varphi}]=[\rho(h_1),\rho(h_2)^{\varphi}]^d[d,\rho(h_2)^{\varphi}][\rho(h_1),e^{\varphi}]^{dh_2}[d,e^{\varphi}]^{h_2}$$
and so
$$[\rho(h_1)d,e^{\varphi}\rho(h_2)^{\varphi}]^{h_2^{-1}}=[\rho(h_1),\rho(h_2)^{\varphi}]^{dh_2^{-1}}[d,\rho(h_2)^{\varphi}]^{h_2^{-1}}[\rho(h_1),e^{\varphi}]^{d}[d,e^{\varphi}].$$ 

Set $u=[\rho(h_1),\rho(h_2)^{\varphi}]^{dh_2^{-1}}[d,\rho(h_2)^{\varphi}]^{h_2^{-1}}[\rho(h_1),e^{\varphi}]^{d}$. 
Thus, on  the left hand side of the above equation we have a tensor, while the right hand side coincides with $ua$. Now we show that $u \in U$. 

By Lemma~\ref{lem.the}, there exist elements $\alpha, \beta, \gamma \in \Theta(G)$ such that 
\begin{itemize}
\item[(i)] $[\rho(h_1),\rho(h_2)^{\varphi}] = [h_1,h_2] \alpha$; 
\item[(ii)] $[d, \rho(h_2)^{\varphi}]=[d, h_2]\beta$;
\item[(iii)] $[\rho(h_1),e^{\varphi}]=[h_1,e] \gamma$. 
\end{itemize}
From (i) we have $[\rho(h_1),\rho(h_2)^{\varphi}]^{dh_2^{-1}} =([h_1,h_2]\alpha)^{{dh_2^{-1}}}\in U_1^{dh_2^{-1}} \Theta(G)\leq~U.$ 
Point (ii) implies that $[d,\rho(h_2)^{\varphi}]^{h_2^{-1}}=([d, h_2]\beta)^{h_2^{-1}}\in U$. Finally, (iii) shows that $[\rho(h_1),e^{\varphi}]^d=([h_1,e] \gamma)^d\in U_1^dU_1^{ed} \Theta(G) \leq U$. Hence $u \in U $. By Lemma~\ref{lem.utheta}, $[[G,G^{\varphi}],u]\leq[[G,G^{\varphi}],a]$. Since $\Theta(G)$ centralizes $[G,G^{\varphi}]$, it follows that
\begin{equation*}
[[G,G^{\varphi}],[h_1,h_2]^{dh_2^{-1}}[d,h_2]^{h_2^{-1}}[h_1,e]^d]\leq[[G,G^{\varphi}],a],    
\end{equation*}
for any choice of $h_1,h_2\in U_1$. In particular, taking $h_1=1$ we have $[[G,G^{\varphi}],[d,h_2]^{h_2^{-1}}]\leq[[G,G^{\varphi}],a]$, while taking $h_2=1$ it follows that $[[G,G^{\varphi}],[h_1,e]^d] \leq[[G,G^{\varphi}],a]$. Hence we can conclude that $$[[G,G^{\varphi}],[h_1,h_2]^{dh_2^{-1}}]\leq[[G,G^{\varphi}],a].$$ Since $[[G,G^{\varphi}],a]$ is normal in $[G,G^{\varphi}]$, we have $[[G,G^{\varphi}],[h_1,h_2]]\leq~[[G,G^{\varphi}],a]^{d^{-1}}$ and so $[[G,G^{\varphi}],U_1']\leq[[G,G^{\varphi}],a]^{d^{-1}}$, which proves that $U_1$ has property 2. 

Finally, consider again the inclusion $[[G,G^{\varphi}],[d,h_2]^{h_2^{-1}}]\leq[[G,G^{\varphi}],a]$. Since $[[G,G^{\varphi}],a]$ is normal in $[G,G^{\varphi}]$, it follows that $[[G,G^{\varphi}],[U_1,d]]\leq[[G,G^{\varphi}],a]$. Therefore $U_1$ has property 3. as well, and we are done.
\end{proof}

The following result is an immediate consequence of \cite[Theorem 1.1]{DS}.

\begin{prop}\label{prop.pavel}
Let $n$ be a positive integer and $G$ a group. Suppose that the size of the conjugacy class $\left|x^{\nu(G)}\right| \leq n$ for every $x \in T_{\otimes}(G)$. Then the second derived subgroup $G''$ is finite with $n$-bounded order. 
\end{prop}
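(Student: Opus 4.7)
The plan is to transfer the bounded-conjugacy hypothesis from $\nu(G)$ down to $G$ via the canonical epimorphism $\rho:\nu(G)\to G$, and then quote the Dierings–Shumyatsky theorem \cite[Theorem 1.1]{DS} directly. The point is that in $G$ every commutator is the $\rho$-image of a tensor, so the hypothesis on tensors automatically yields the corresponding hypothesis on commutators in $G$.

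More precisely, pick any commutator $c=[g,h]$ in $G$. Then $c=\rho([g,h^{\varphi}])$, and $[g,h^{\varphi}]\in T_{\otimes}(G)$. Because $\rho$ is surjective, for every $s\in G$ we can choose $\tilde{s}\in\nu(G)$ with $\rho(\tilde{s})=s$, and then $c^{s}=\rho([g,h^{\varphi}]^{\tilde{s}})$. Hence
\begin{equation*}
c^{G}=\rho\bigl([g,h^{\varphi}]^{\nu(G)}\bigr),
\end{equation*}
which gives $|c^{G}|\le|[g,h^{\varphi}]^{\nu(G)}|\le n$ by hypothesis. Thus every commutator of $G$ has at most $n$ conjugates in $G$.

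Now \cite[Theorem 1.1]{DS} asserts exactly that under this condition the second derived subgroup $G''$ is finite of $n$-bounded order, so the conclusion follows at once.

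I do not foresee any real obstacle: the only ingredients used are the existence and surjectivity of $\rho$ (Lemma~\ref{lem.theta}(a)) together with the fact that tensors of the form $[g,h^{\varphi}]$ project onto arbitrary commutators $[g,h]$ of $G$. The more substantive work in this paper lies in Theorem~A, where one needs to control $\nu(G)''$ itself rather than just $G''$; Proposition~\ref{prop.pavel} is the easy half, serving as the input to which the structural arguments built around Hypothesis~\ref{hyp} and Proposition~\ref{prop.subgroup} are subsequently added.
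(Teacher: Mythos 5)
Your argument is correct and is essentially the paper's own proof: both push the bound on tensor conjugacy classes down through the epimorphism $\rho$ to conclude that every commutator of $G$ has at most $n$ conjugates, and then invoke \cite[Theorem 1.1]{DS}. Your write-up just makes the inequality $|c^{G}|\le|[g,h^{\varphi}]^{\nu(G)}|$ slightly more explicit than the paper does.
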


\begin{proof}
Let $w=[a,b]$ be a commutator of $G$. Since $\rho: \nu(G) \rightarrow G$ is an epimorphism, we deduce that  the commutator $w$ has at most as many conjugate in $G$ as the tensor $[a, b^{\varphi}]$ has in $\nu(G)$. Consequently,  every commutator of $G$ has $n$-boundedly finite conjugacy class in $G$. Therefore \cite[Theorem 1.1]{DS} shows that $|G''|$ is finite and $n$-bounded. 
\end{proof}

We are in a position to give a proof of our main result.

\begin{proof}[Proof of Theorem~A] 
Denote the non-abelian tensor square $[G,G^{\varphi}]$ by $H$. Let $m$ be the maximum of indices of $C_{H}(x)$ in $H$, where $x\in T_{\otimes}(G)$.
Then consider $a\in T_{\otimes}(G)$ such that $C_{H}(a)$ has index precisely $m$ in $H$. By Lemma~\ref{lem.coset} we can choose $b_1,\ldots,b_m\in H$ such that $l_{\otimes}(b_i)\leq m-1$ and $a^H=\{a^{b_i};\ i=1,\ldots,m\}$. Set $U=C_{\nu(G)}(\langle b_1,\ldots,b_m\rangle)$. Note that the index of $U$ in $\nu(G)$ is $n$-bounded. Applying Proposition~\ref{prop.subgroup}, we find a subgroup $U_1$, of $n$-bounded index, such that $[H,U_1']\leq\langle[H,a]^{\nu(G)}\rangle$. Since the index of $U_1$ in $\nu(G)$ is $n$-bounded,  so $H/H\cap U_1$ is. Therefore, we can find $n$-boundedly many tensors $t_1,\ldots,t_s\in T_{\otimes}(G)$ such that $H=\langle t_1,\ldots,t_s,H\cap U_1\rangle$. Let $T$ be the normal closure in $\nu(G)$ of the product of the subgroups $[H,a]$ and $[H,t_i]$ for $i=1,\ldots,s$. By Lemma~\ref{lem.comm} each of these subgroups has $n$-bounded order. Moreover, they have at most $n$ conjugates. Thus, $T$ is a product of $n$-boundedly many finite subgroups, normalizing each other and having $n$-bounded order. We conclude that $T$ has finite $n$-bounded order. Therefore it is sufficient to show that the second derived group of the quotient group $\nu(G)/T$ has finite $n$-bounded order. 

Notice that the derived subgroup of $HU_1$ is contained in $Z(H)$ modulo $T$. Indeed, $HU_1$ is generated by $t_1,\ldots,t_s$ and $U_1$. Thus $[H,t_i] \leq T$ and $[H, U_1'] \leq \langle[H,a]^{\nu(G)}\rangle \leq T$, that is, $t_1,\ldots,t_s\in Z(H)$ and $U_1'\leq Z(H)$ modulo $T$. So we pass to the quotient $\nu(G)/T$ and to avoid complicated notation the images of $\nu(G)$, $H$ and $T_{\otimes}(G)$ will be denoted by the same symbols. 

Let $\mathcal F$ denote the family of subgroups $S\leq \nu(G)$ with the following properties.
\begin{enumerate}
\item $\nu(G)'\leq S$;
\item $S'\leq Z(H)$;
\item $S$ has finite index in $\nu(G)$.
\end{enumerate}

First of all observe that $\mathcal F$ is not empty because $HU_1$ belongs to $\mathcal F$. 
Then let $F \in\mathcal F$ of minimal possible index $j$ in $\nu(G)$. Thus $j$ is $n$-bounded because the index of $U_1$ in $\nu(G)$ is $n$-bounded. Now, we argue by induction on $j$. If $j=1$, then $F=\nu(G)$ and $ \nu(G)' \leq Z(H)$. So $\nu(G)''=1$ and we have nothing to prove. Thus, assume that $j\geq2$.

Let $a_0\in T_{\otimes}(G)$ such that $C_H(a_0)$ has maximal possible index in $H$ and write $a_0=[d,e ^{\varphi}]$ for suitable $d,e\in G$. Firstly assume that both $d$ and $e^{\varphi}$ belong to $F$. Then $a_0 \in F' \leq Z(H)$, and we conclude that $H$ is abelian, that is $H'=1$. Moreover, by Proposition~\ref{prop.pavel} it follows that $G''$ is finite on $n$-bounded order. Therefore \cite[Theorem 3.3]{NR1} implies that $\nu(G)''=[G', (G')^{\varphi}] G'' (G'')^{\varphi}$ is finite. 

Thus, we may assume that at least one among $d$ and $e^{\varphi}$ does not belong to $F$, say $d$. By Proposition~\ref{prop.subgroup} it follows that there exists a subgroup $V$ of $n$-bounded index in $\nu(G)$ such that $[H,[V,d]]\leq[H,a_0]$. Without loss of generality we may assume that $V\leq F$, otherwise we can replace $V$ by $V\cap F$. Let $L=F\langle d\rangle$. Note that $L'=F'[F,d]$. Let $1=g_1,\ldots,g_t$ be a full system of representatives of the right cosets of $V$ in $F$. Then standard commutator identities show that $[F,d]$ is generated by $[V,d]^{g_1},\ldots,[V,d]^{g_t}$ and $[g_1,d],\ldots,[g_t,d]$. Denote by $R$ the normal closure in $\nu(G)$ of the product of the subgroups $[H,{a_0}]^{g_i}$ and $[H, [g_i,d]]$ for $i=1,\ldots,t$.
For every $i=1,\ldots,t$, Lemma ~\ref{lem.the} implies that $[g_i,d]$ is a tensor modulo $\Theta(G)$. Hence, applying Lemma~~\ref{lem.comm}, $[H,{a_0}]^{g_i}$ and $[H, [g_i,d]]$ have finite $n$-bounded order.
Moreover, the hypothesis implies that each of them has at most $n$ conjugates. Thus, $R$ is the product of $n$-boundedly many finite subgroups, normalizing each other and having $n$-bounded orders. We conclude that $R$ has finite $n$-bounded order. Since $F' \leq Z(H)$, $[H,L']=[H,[F,d]]\leq R$. This means that $L' \leq Z(H)$ modulo $R$. Moreover, since $d\not\in F$, the index of $L$ in $\nu(G)$ is strictly smaller than $j$. Therefore, by induction on $j$, the second derived group of $\nu(G)/R$ is finite with $n$-bounded order. Taking into account that also $R$ is finite with $n$-bounded order, we deduce that $\nu(G)''$ is finite with $n$-bounded order. The proof is now complete.
\end{proof}

For the reader's convenience we restate Corollary B. 

\begin{corB}
Let $n$ be a positive integer. Let $G$ be a group in which the derived subgroup $G'$ is finitely generated. Assume that the size of the conjugacy class $|x^{\nu(G)}| \leq n$ for every $x \in T_{\otimes}(G)$. Then $G$ is a BFC-group.
\end{corB}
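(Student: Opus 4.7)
The plan is to invoke Neumann's theorem \cite{neu}, which characterises BFC-groups as precisely those groups whose derived subgroup is finite. Hence it suffices to prove that $G'$ is finite, and my strategy is to deduce the finiteness of the non-abelian tensor square of $G'$ and then apply Lemma~\ref{fg}.

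First, I would apply Theorem~A to the group $G$ under the present hypothesis; this gives immediately that $\nu(G)''$ has finite, $n$-bounded order. To extract the finiteness of the tensor square of $G'$ from this, I would invoke the decomposition from \cite[Theorem 3.3]{NR1}, already used in the proof of Theorem~A, namely
\[\nu(G)'' = [G',(G')^{\vfi}]\, G''\, (G'')^{\vfi},\]
which shows that $[G', (G')^{\vfi}]$---identified, under the conventions of the paper, with the non-abelian tensor square of $G'$---is contained in the finite group $\nu(G)''$, and is therefore finite.

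Since $G'$ is finitely generated by hypothesis and its non-abelian tensor square is finite, Lemma~\ref{fg} (the Parvizi--Niroomand criterion) then forces $G'$ itself to be finite, and Neumann's theorem concludes that $G$ is a BFC-group. The main delicate point I anticipate is precisely this last step, i.e.\ identifying the subgroup $[G', (G')^{\vfi}]$ arising inside $\nu(G)$ with the intrinsic non-abelian tensor square of the standalone group $G'$ to which Lemma~\ref{fg} speaks; however, under Rocco's isomorphism \cite[Proposition 2.6]{NR1}, already part of the standing conventions of the paper, the identification is immediate, so that the whole argument reduces to the two-step deduction above.
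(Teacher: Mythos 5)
Your proposal is correct and follows exactly the same route as the paper's own proof: reduce to finiteness of $G'$ via Neumann's theorem, apply Theorem~A to get $\nu(G)''$ finite, use the decomposition $\nu(G)''=[G',(G')^{\varphi}]\,G''\,(G'')^{\varphi}$ to deduce that $[G',(G')^{\varphi}]$ is finite, and conclude with Lemma~\ref{fg}. No differences worth noting.
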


\begin{proof}
By Neumann's result \cite[Theorem 3.1]{neu}, it is sufficient to prove that the derived subgroup $G'$ is finite. 

By Theorem A, the second derived subgroup $\nu(G)''$ is finite. Since $\nu(G)'' = [G',(G')^{\varphi}] (G'')(G'')^{\varphi}$, it follows that the non-abelian tensor square $[G',(G')^{\varphi}]$ is finite. By Lemma \ref{fg}, the derived subgroup $G'$ is finite. The proof is complete. 
\end{proof}

\subsection{Examples}

The next example shows that Corollary B no longer holds if we get rid of the hypothesis of $G'$ to be finitely generated.

\begin{ex} \label{ex.prufer}
Let $p$ be a prime. We define the semi-direct product $G = A \rtimes C_2$, where $C_2 = \langle d \ | \ d^2=1 \rangle$, $A = C_{p^{\infty}}$ is the Pr\"ufer group and $$a^d =a^{-1},$$ for every $a \in A$.
Then the group $G$ is not a BFC-group, whose commutator subgroup $G'$ is not finitely generated, such that $|x^{\nu(G)}| \leq 4$ for every $x \in T_{\otimes}(G)$.

Since $G'=A$ is a Pr\"ufer group, it follows that $G'$ is not finitely generated, and $G$ is not a BFC-group by Neumann's result \cite[Theorem 3.1]{neu}. Now, for all $g,h \in A$, as $C_2$ is generated by $d$, we have 

\begin{itemize}
    \item $[g,d^{\varphi}]^{h} = [g,(d^{h})^{\varphi}] =[g,(dh^2)^{\varphi}] = [g,(h^2)^{\varphi}][g,d^{\varphi}]^{h^2} =  [g,d^{\varphi}]^{h^2}$;
    \item $[d,g^{\varphi}]^h = [d^{h},g^\varphi] = [dh^2,g^{\varphi}] = [d,g^{\varphi}]^{h^2}[h^2,g^{\varphi}] = [d,g^{\varphi}]^{h^2}$;
    \item $[g,d^{\varphi}]^d = [g^{-1},d^{\varphi}]$ and $[d,g^{\varphi}]^d = [d,(g^{-1})^{\varphi}]$.
\end{itemize}
In particular, $[g,d^{\varphi}]^{h} = [g,d^{\varphi}] $ and $[d,g^{\varphi}]^h = [d,g^{\varphi}]$, for all $g,h \in A$. If $\alpha,\beta\in G$, there exist $a,b \in A$ and $i,j \in \{0,1\}$ such that $\alpha = ad^i$ and $\beta = bd^j$. Therefore,
\begin{eqnarray*}
[\alpha,\beta^{\varphi}] & = & [ad^i, (bd^j)^{\varphi}] \\ 
& = &  ([a,(d^j)^{\varphi}][a,b^{\varphi}]^{d^j})^{d^i}([d^i,(d^j)^{\varphi}][d^i,b^{\varphi}]^{d^j}) \\
 & = &  [a,(d^j)^{\varphi}]^{d^i}[d^i,b^{\varphi}]^{d^j}[d,d^{\varphi}]^{ij} \\ 
 & = &  [a^{\varepsilon_1},(d^j)^{\varphi}][d^i,(b^{\varepsilon_2})^{\varphi}]z,
\end{eqnarray*}
where $z=[d,d^{\varphi}]^{ij} \in  Z(\nu(G))$ and $\varepsilon_k \in \{1,-1\}$, $k=1,2$. Moreover, for every $w \in \nu(G)$ there exist $\gamma \in G$, $c \in A$ and $l \in \{0,1\}$ such that $\rho(w) = \gamma = cd^l$. Now, by Lemma \ref{basic.nu} (d), $[\alpha,\beta^{\varphi}]^{w} = [\alpha,\beta^{\varphi}]^{\rho(w)}$ and 
$$
[\alpha,\beta^{\varphi}]^{\rho(w)} = [\alpha,\beta^{\varphi}]^{\gamma} = ([a^{\varepsilon_1},(d^j)^{\varphi}][d^i,(b^{\varepsilon_2})^{\varphi}]z)^{cd^l} = ([a^{\varepsilon_1},(d^j)^{\varphi}][d^i,(b^{\varepsilon_2})^{\varphi}]z)^{d^l}.  
$$
It follows that, the conjugacy class $$[\alpha,\beta^{\varphi}]^{\nu(G)} \subseteq \{[a^{\varepsilon_1},(d^j)^{\varphi}][d^i,(b^{\varepsilon_2})^{\varphi}]z \ | \  \varepsilon_i \in \{-1,1\} \},$$ and $|[\alpha,\beta^{\varphi}]^{\nu(G)}| \leq 4$, for any $\alpha,\beta\in G$.
\end{ex}

Finally, notice that in Corollary B we cannot replace the hypothesis ``$|x^G| \leq n$ for every  $x \in T_{\otimes}(G)$" by ``$|x^G| \leq n$ for every $x$ commutator of $G$", as the following example shows.

\begin{ex} \label{ex.dihedral}
Let $A=\langle a \rangle $ be an infinite cyclic group and let $C_2=\langle d \ | \ d^2=1 \rangle$ be a cyclic group of order $2$. Then consider the semi-direct product $G = A \rtimes C_2$ where $a^d=a^{-1}$ (infinite dihedral group). Then for every commutator $x$ of $G$ we have $|x^G|\leq 2$. However, the derived subgroup $G'$ is an infinite subgroup of $A$, so $G$ is not a BFC-group.   
\end{ex}

\section{Acknowledgments}
The work of the first author was supported by FAPDF/Brazil and DPI/UnB, while the second author was supported by the ``National Group for Algebraic and Geometric Structures, and their Applications" (GNSAGA - INdAM). Moreover, this study was carried out during the second author's visit to the University of Brasilia. He wishes to thank the Department of Mathematics for the excellent hospitality.

\end{document}